\documentclass[10pt,b5paper]{article}
\usepackage[T1]{fontenc}
\usepackage[utf8]{inputenc}
\usepackage{hyperref}
\usepackage{amsmath,amssymb,amsthm}
\usepackage{geometry}
\usepackage{url}
\newcommand{\E}{\mathbb E}
\newcommand{\Bin}{\operatorname{Bin}}
\newcommand{\Poi}{\operatorname{Poi}}
\newcommand{\NB}{\operatorname{NB}}
\newcommand{\Hyp}{\operatorname{Hyp}}
\newcommand{\Z}{\mathbb Z}
\newcommand{\R}{\mathbb R}
\newcommand{\Bh}{\widehat B}

\theoremstyle{plain}
\newtheorem{theorem}{Theorem}[section]

\newtheorem{proposition}[theorem]{Proposition}
\newtheorem{corollary}[theorem]{Corollary}
\theoremstyle{remark}
\newtheorem{remark}[theorem]{Remark}
\theoremstyle{definition}

\title{The classical discrete laws near the mode:\\
complete local expansions for the negative binomial\\
and hypergeometric distributions}
\author{
N.~Elezovi\'c\\
Department of Applied Mathematics,\\
Faculty of Electrical Engineering and Computing,\\
University of Zagreb, 10000 Zagreb, Croatia\\
\texttt{neven.elezovic@fer.hr}
}
\date{\today}

\begin{document}
\maketitle

\begin{abstract}
	We derive complete local asymptotic expansions near the mode for the negative binomial and
	hypergeometric laws, complementing the binomial expansion obtained in the companion papers.
	The coefficients are given in closed Bernoulli-polynomial form and retain the exact lattice
	displacement from the mean.  For each law an entropy normalisation replaces the Bernoulli
	polynomials by reciprocal Bernoulli polynomials and makes the natural reflection symmetry
	visible.  The first-order coefficient also recovers the exact mode rule: in the Katz cases
	the vertex gives the classical threshold exactly, while in the hypergeometric case its
	explicit discrepancy is always too small to change the selected integer mode.
\end{abstract}

\noindent\textbf{2020 Mathematics Subject Classification.}
	41A60, 60C05, 11B68, 33B15.

\medskip\noindent\textbf{Keywords.}
	Mode; local limit theorem; negative binomial distribution; hypergeometric distribution;
	Bernoulli and reciprocal Bernoulli polynomials; Stirling series.

\section{Introduction}\label{sec:intro}

	Let $X$ be one of the classical discrete random variables.  Near its mode the mass
	$P(k)=\Pr\{X=k\}$ is of order $(2\pi\sigma^{2})^{-1/2}$, and the question of this paper is
	the \emph{complete} expansion in this regime: all orders, with closed coefficients, and with
	the arithmetic of the lattice --- the mean is in general not a lattice point --- carried
	exactly.

	For the binomial this was done in \cite{elezovic_mad,elezovic_mode}: writing
	$m=\lceil Np\rceil+r$ with $r$ a fixed integer, the mass has an expansion in integer powers
	of $N^{-1}$ whose coefficients are Bernoulli polynomials of the oscillating fractional
	displacement, organised by three normalisations --- naive, size-biased (pure Appell), and
	even (entropy) --- the last governed by the reciprocal Bernoulli polynomials
	$\Bh_n(t)=B_n(t)+\tfrac n2t^{n-1}$ of Kellner~\cite{kellner}, generated by
	$\tfrac z2\coth\tfrac z2$, which restore the $p\leftrightarrow q$ symmetry; and the exact
	mode rule $\lfloor(N+1)p\rfloor$ is already visible in the coefficient of $N^{-1}$.  The
	mean-absolute-deviation applications of these local expansions, for the whole family, are
	developed in \cite{elezovic_madfam}.

	Here we carry the local programme itself to the rest of the family.  The first part gives
	the complete expansions for the negative binomial and hypergeometric laws
	(Theorems~\ref{thm:nb-main} and~\ref{thm:hyp-main}).
	For $X\sim\NB(r,p)$ with $r\to\infty$ and for $X\sim\Hyp(N,K,n)$ with $N\to\infty$ and
	margins in fixed proportions, the mass at $k=\mu+t$, $t$ bounded, expands in integer powers
	of the large parameter with closed Bernoulli-polynomial coefficients.  The negative binomial
	needs three gamma factors with unequal scalings $\tfrac1p,\,1,\,\tfrac qp$; the
	hypergeometric needs nine, organised along the $2\times2$ table with margins
	$K,N-K,n,N-n$, whose independence identity ($ad=bc$ for the cell proportions) assembles all
	elementary constants.

	The second part concerns the even normalisations
	(Theorems~\ref{thm:nb-even} and~\ref{thm:hyp-even}).
	Normalising instead by the local (entropy) prefactor --- for the negative binomial
	$\bigl(2\pi\,k(k+r)/r\bigr)^{-1/2}$, for the hypergeometric the all-Stirling prefactor of
	the four cells --- replaces every $B_{n+1}$ of the displacement by $\Bh_{n+1}$,
	\emph{verbatim}.  This extends the binomial phenomenon of \cite{elezovic_mode} to the
	family.

	The third point is the reflection structure.  The binomial's
	$p\leftrightarrow q$ symmetry, made manifest by $\Bh$, generalises to the negative binomial
	as the \emph{formal negative reflection} $q\mapsto1/q$ (equivalently $p\mapsto-p/q$): the
	even coefficients are proved covariant under it combined with $t\mapsto-t$.  For the
	hypergeometric the reflection is the table transposition $k\mapsto n-k$,
	$K\leftrightarrow N-K$.

	Finally, the first-order coefficient determines the mode.  Let $t_c$ be the vertex of the
	quadratic part of the first-order coefficient.  For the three
	Katz laws (linear probability ratio) the vertex reproduces the classical mode threshold
	\emph{exactly}:
	\[
		\mu+t_c+\tfrac12=\lambda,\qquad (N+1)p,\qquad \frac{(r-1)q}{p}
	\]
	for the Poisson, binomial and negative binomial, so the mode is the unique integer in the
	unit window centred at $\mu+t_c$.  For the hypergeometric (quadratic Ord ratio) the vertex
	misses the classical threshold $(n+1)(K+1)/(N+2)$ by the explicit amount
	$-(2K-N)(2n-N)/\bigl(N^{2}(N+2)\bigr)$ --- nonzero unless a margin is exactly half the
	population --- but we prove this is always smaller than the distance of the threshold to
	the nearest integer, so the nearest-integer-to-vertex rule still yields the exact mode,
	with ties precisely at the two-mode configurations.

	The Poisson sits inside all of this as the degenerate centre of the family: one gamma
	factor, weight one, and (Proposition~\ref{prop:nb-seam}) the negative binomial expansion
	degenerates to it coefficientwise along the seam $rq\to\lambda$.

	The underlying frame is the Katz--Ord family: laws whose probability ratio
	$P(k+1)/P(k)$ is a rational function of $k$ of degree one (Katz: binomial, Poisson,
	negative binomial) or two (Ord: hypergeometric); see \cite[Ch.~2]{jkk},
	Katz~\cite{katz1965} and Ord~\cite{ord1967}.  Every structural statement below --- the size-bias identities, the
	degree of the mode-window analysis, the exactness or near-exactness of the vertex rule ---
	tracks this degree.

\section{Preliminaries}\label{sec:prelim}

	$B_n$ and $B_n(t)$ are the Bernoulli numbers and polynomials, $B_1=-\tfrac12$, with
	\begin{equation}\label{eq:appell}
	\begin{gathered}
		B_n(t+1)-B_n(t)=n\,t^{n-1},
		\qquad B_n(1-t)=(-1)^{n}B_n(t),\\
		B_n(1)=B_n\qquad(n\ne1).
	\end{gathered}
	\end{equation}
	The \emph{reciprocal Bernoulli polynomials} \cite{kellner} are
	\begin{equation}\label{eq:Bhat}
	\begin{gathered}
		\Bh_n(t):=\tfrac12\bigl[B_n(t)+B_n(t+1)\bigr]
			=B_n(t)+\tfrac n2\,t^{n-1}\quad(n\ge1),\\
		\sum_{n\ge0}\Bh_n(t)\frac{z^{n}}{n!}
			=\frac z2\coth\frac z2\,e^{tz},
	\end{gathered}
	\end{equation}
	an \emph{even} Appell sequence: $\Bh_n(-t)=(-1)^{n}\Bh_n(t)$ (DLMF~\cite[24.4.5]{dlmf}; see
	also \cite[\S3]{elezovic_mode}).

	Asymptotic statements are complete and uniform in the sense of \cite[\S1]{elezovic_mad}:
	for every fixed truncation order $M$ the remainder is $O(x^{-M-1})$ uniformly over the
	stated parameter compacts, and exponentials of truncated series are re-expanded through the
	standard Bell recursion.  Each theorem below expands $\log$ of the normalised mass; the mass
	itself follows by exponentiation and has a complete expansion of the same integer-power,
	Bernoulli-coefficient shape.  We use throughout the shifted Stirling series
	\begin{equation}\label{eq:stirling-shift}
		\log\Gamma(x+t)
		\sim\Bigl(x+t-\tfrac12\Bigr)\log x-x+\tfrac12\log2\pi
		+\sum_{n\ge1}
			\frac{(-1)^{n+1}B_{n+1}(t)}{n(n+1)}\,\frac1{x^{n}},
	\end{equation}
	uniformly for $t$ in compacta.  This is classical --- it is \cite[(5.11.8)]{dlmf}, and the
	corresponding expansion for a ratio of two gamma functions goes back to Tricomi and
	Erd\'elyi \cite{tricomi_erdelyi}; the Bernoulli polynomials of the shift are already present
	there.  What the present paper adds is not this identity but its complete assembly for the
	negative binomial and the hypergeometric, together with the mode-vertex consequences.  We
	shall use it through the expansion of a balanced gamma quotient with unequal
	scalings, \cite[Lemma 2.1]{elezovic_mad}, which we state in the following form: if
	$\sum_j\lambda_j=\sum_k\mu_k$, then
	\begin{multline}\label{eq:lemma}
		\log\frac{\prod_j\Gamma(\lambda_jx+u_j)}{\prod_k\Gamma(\mu_kx+v_k)}
		\sim \Theta x+U\log x+C\\
		\quad+\sum_{n\ge1}\frac{(-1)^{n+1}}{n(n+1)}
		\Bigl[\sum_j\frac{B_{n+1}(u_j)}{\lambda_j^{\,n}}
		-\sum_k\frac{B_{n+1}(v_k)}{\mu_k^{\,n}}\Bigr]\frac1{x^{n}},
	\end{multline}
	with the elementary constants $\Theta,U,C$ as given there and, for every $M$, a remainder
	$O(x^{-M-1})$ uniform for the shifts in compacta and the scalings in compacta of
	$(0,\infty)$.

\section{Size-biasing across the family}\label{sec:sizebias}

	The transfer of the binomial method rests on two ingredients: the size-bias identity, which
	controls the elementary (non-Bernoulli) content, and the gamma-quotient form of the pmf,
	which feeds \eqref{eq:lemma}.  We record the first for the family.

\begin{proposition}[Size biasing across the family]\label{prop:sizebias}
	Let $P_\theta(k)$ denote one of the four laws in this paper, with mean $\mu(\theta)$.  Then
	\begin{equation}\label{eq:sb-family}
		k\,P_\theta(k)=\mu(\theta)\,P_{\theta^\downarrow}(k-1),
	\end{equation}
	where the size-biased parameter $\theta^\downarrow$ is given by
	\[
	\begin{array}{c|c|c}
		\text{law} & \mu(\theta) & \theta^\downarrow\\ \hline
		\Bin(N,p) & Np & (N-1,p)\\[2pt]
		\Poi(\lambda) & \lambda & \lambda\\[2pt]
		\NB(r,p) & rq/p & (r+1,p)\\[2pt]
		\Hyp(N,K,n) & nK/N & (N-1,K-1,n-1).
	\end{array}
	\]
	The negative binomial is in the failures parametrisation,
	\[
		P(k;r,p)=\binom{k+r-1}{k}p^rq^k .
	\]
\end{proposition}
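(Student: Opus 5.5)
The identity \eqref{eq:sb-family} is a finite algebraic statement about explicit probability mass functions, so I would verify it law by law. In each case I write out $P_\theta(k)$, multiply by $k$, and absorb the factor $k$ into the combinatorial coefficient through the absorption identity $k\binom{m}{k}=m\binom{m-1}{k-1}$ or the analogous step $k/k!=1/(k-1)!$. For $k=0$ both sides vanish, since $P_{\theta^\downarrow}(-1)=0$, so I may assume $k\ge1$ throughout.

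\emph{Poisson and binomial.} For the Poisson, $k e^{-\lambda}\lambda^k/k!=\lambda\cdot e^{-\lambda}\lambda^{k-1}/(k-1)!$, which is the claim with $\theta^\downarrow=\lambda$. For the binomial, $k\binom Nk p^kq^{N-k}=Np\binom{N-1}{k-1}p^{k-1}q^{(N-1)-(k-1)}$, which is $\mu P_{(N-1,p)}(k-1)$.

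\emph{Negative binomial.} In the failures parametrisation,
\[
	k\binom{k+r-1}{k}=\frac{(k+r-1)!}{(k-1)!\,(r-1)!}=r\binom{(k-1)+(r+1)-1}{k-1}.
\]
Hence
\[
	k\,P(k;r,p)=r\binom{k-1+r}{k-1}p^{r}q^{k}=\frac{rq}{p}\binom{k-1+r}{k-1}p^{r+1}q^{k-1}=\frac{rq}{p}\,P(k-1;r+1,p).
\]
So here the size bias raises $r$ by one rather than lowering it. This is the only place where I would pay attention to the bookkeeping: the powers of $p$ and $q$ must be redistributed so that the prefactor becomes exactly the mean $rq/p$.

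\emph{Hypergeometric.} I write
\[
	P(k)=\binom Kk\binom{N-K}{n-k}\Big/\binom Nn .
\]
Absorption gives $k\binom Kk=K\binom{K-1}{k-1}$. The second factor equals $\binom{(N-1)-(K-1)}{(n-1)-(k-1)}$ unchanged. For the denominator, $\binom Nn=\frac Nn\binom{N-1}{n-1}$. Combining these,
\[
	kP(k)=\frac{nK}{N}\,\frac{\binom{K-1}{k-1}\binom{N-K}{n-k}}{\binom{N-1}{n-1}}=\frac{nK}{N}\,P_{(N-1,K-1,n-1)}(k-1).
\]

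In each case the prefactor produced is exactly the listed mean, which one can confirm independently by summing \eqref{eq:sb-family} over $k$. I expect no genuine obstacle: the only points needing care are the support conventions at the boundary (binomial coefficients vanishing outside their range, and $k=0$) and the index shift $r\mapsto r+1$ in the negative binomial.
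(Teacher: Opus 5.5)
Your proof is correct and follows the paper's argument essentially verbatim: the absorption identity law by law; for the negative binomial, the rewriting $\binom{k+r-1}{k-1}=\binom{(k-1)+(r+1)-1}{k-1}$ together with $p^{r}q^{k}=\frac{q}{p}\,p^{r+1}q^{k-1}$; and for the hypergeometric, the same three binomial-coefficient identities. The only nuance is that the paper allows real $r>0$, so read your $(k+r-1)!/((k-1)!\,(r-1)!)$ as $\Gamma(k+r)/(\Gamma(k)\Gamma(r))$, which is how the paper writes it; the identity is unchanged.
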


\begin{proof}
	For the binomial, \eqref{eq:sb-family} is the absorption
	$k\binom Nk=N\binom{N-1}{k-1}$; for the Poisson it is immediate.  For the negative binomial,
	\[
		k\binom{k+r-1}{k}
		=\frac{\Gamma(k+r)}{\Gamma(k)\,\Gamma(r)}
		=r\binom{k+r-1}{k-1},
	\]
	and
	$\binom{k+r-1}{k-1}=\binom{(k-1)+(r+1)-1}{k-1}$, so
	\[
		kP(k;r,p)=r\binom{k+r-1}{k-1}p^rq^k
		=\frac{rq}p\binom{(k-1)+r}{k-1}p^{r+1}q^{k-1}.
	\]
	For the hypergeometric, use $k\binom Kk=K\binom{K-1}{k-1}$,
	$\binom{N-K}{n-k}=\binom{(N-1)-(K-1)}{(n-1)-(k-1)}$ and
	$\binom Nn=\frac Nn\binom{N-1}{n-1}$.
\end{proof}

\begin{remark}[Provenance, and the recurrence behind it]\label{rem:katz-provenance}
	For the Katz subfamily, Proposition~\ref{prop:sizebias} is the $r=0$ case of the classical
	factorial-moment recurrence $(x+1)^{[r+1]}p_{x+1}=(\alpha+\beta x)(x+1)^{[r]}p_x$ that
	defines the family (see \cite[eq.\ (2.41)]{jkk}); ratio-based diagnostics on the same
	recurrence go back to Guldberg and Ottestad, and its systematic use is
	Katz~\cite{katz1965}.  The probabilistic reading --- ``the size-biased law is a unit
	parameter shift of the same family'' --- is standard.  What is used here is its
	\emph{asymptotic} role, as in \cite{elezovic_mode}: the identity is exactly the elementary
	tail of the local expansions below.
\end{remark}

\begin{remark}[One shift for the mode, two for the mean deviation]\label{rem:double-shift}
	For the hypergeometric, \eqref{eq:sb-family} shifts all three parameters down by one.  In the
	mean-absolute-deviation application \cite{elezovic_madfam} a \emph{second}, complementary
	shift appears --- $\E|X-\mu|=2\sigma^{2}P\{\Hyp(N-2,K-1,n-1)=\nu-1\}$, $\nu=\lceil\mu\rceil$ --- because the
	quadratic Ord ratio supplies two growing entries (the diagonal cells of the $2\times2$
	table), each with its own size-bias factor.  The same doubling will be visible below in the
	local expansion, where the diagonal cells carry the shifts $t+1$.
\end{remark}

	The Poisson local expansion, for later reference, is one application of
	\eqref{eq:stirling-shift}: with $k=\lambda+t$, $t$ in a compact set, $\Gamma(k+1)$ is the
	single large gamma factor and
	\begin{equation}\label{eq:poi-local}
		\log\Bigl[\sqrt{2\pi\lambda}\;P(k;\lambda)\Bigr]
		\sim-\sum_{n\ge1}\frac{(-1)^{n+1}B_{n+1}(t+1)}{n(n+1)}\,\frac1{\lambda^{n}} ,
	\end{equation}
	uniformly; the mean $\lambda$ is the natural centre, and the mode is
	$\lfloor\lambda\rfloor$.

\section{The negative binomial}\label{sec:nb}

	Let $X\sim\NB(r,p)$ in the failures parametrisation, $r>0$ real, $p$ in a compact subset of
	$(0,1)$, $q=1-p$,
	\[
		P(k)=\frac{\Gamma(k+r)}{\Gamma(r)\,\Gamma(k+1)}\,p^{r}q^{k},
		\qquad
		\mu=\frac{rq}{p},\qquad\sigma^{2}=\frac{rq}{p^{2}},
	\]
	and let $r\to\infty$.  Throughout, $k=\mu+t$ with $t$ in a fixed compact set $H\subset\R$
	(and $k\in\Z_{\ge0}$, which for large $r$ is compatible with any $H$).

	The law has been approached locally twice before, in a form worth stating precisely, since
	the difference from what follows is one of kind rather than of degree.  Govindarajulu
	\cite[Result 4.3.3]{govindarajulu} expands the mass uniformly in $k$ to $o(N^{-2})$ as the
	standard normal density and its derivatives $\varphi^{(3)},\dots,\varphi^{(7)}$ in the
	\emph{standardised} variable, with coefficients assembled from the cumulants of the
	geometric summand; his binomial, Poisson and hypergeometric sections
	\cite[\S\S4.1,4.2,4.4]{govindarajulu} are of the same Edgeworth type, the hypergeometric
	being reached through a binomial approximation.  Ouimet \cite{ouimet_nb} works in exactly
	the present regime and gives two explicit orders, $O((rp)^{-1/2})$ and $O((rp)^{-1})$, with
	a rigorous remainder, applying them to a refined continuity correction and to the median.
	Both are fixed-order expansions in a continuous standardised variable; neither carries
	Bernoulli polynomials, and neither addresses the mode.  Theorem~\ref{thm:nb-main} instead
	fixes the lattice displacement, gives \emph{all} orders in closed Bernoulli form, and
	thereby makes the mode-vertex statement of Theorem~\ref{thm:nb-mode} available at all.

\subsection{The complete expansion}

\begin{theorem}\label{thm:nb-main}
	As $r\to\infty$, uniformly for $p$ in compact subsets of $(0,1)$ and $t\in H$ with
	$\mu+t\in\Z_{\ge0}$,
	\begin{equation}\label{eq:nb-main}
	\begin{gathered}
		\log\Bigl[\sqrt{2\pi\sigma^{2}}\;P(\mu+t)\Bigr]
		\sim\sum_{n\ge1}\frac{A_n(t;p)}{r^{n}},\\
		A_n=\frac{(-1)^{n+1}}{n(n+1)}
		\Bigl[p^{n}B_{n+1}(t)
		-\Bigl(\frac pq\Bigr)^{n}B_{n+1}(t+1)-B_{n+1}\Bigr].
	\end{gathered}
	\end{equation}
	In particular
	\begin{equation}\label{eq:nb-a12}
		A_1(t;p)=-\frac{p^{2}}{2q}\,B_2(t)-\frac pq\,t-\frac1{12},
		\qquad
		A_2(t;p)=-\frac16\Bigl[p^{2}B_3(t)-\frac{p^{2}}{q^{2}}\,B_3(t+1)\Bigr],
	\end{equation}
\end{theorem}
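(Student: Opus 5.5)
Write $k=\mu+t=\frac qp r+t$. Then the mass is a balanced gamma quotient in the large variable $x=r$:
\[
	P(\mu+t)=\frac{\Gamma\bigl(\tfrac1p\,r+t\bigr)}{\Gamma(r)\,\Gamma\bigl(\tfrac qp\,r+t+1\bigr)}\,p^{r}q^{\mu+t}.
\]
The scalings are $\lambda_1=\tfrac1p$ upstairs, and $\mu_1=1$, $\mu_2=\tfrac qp$ downstairs. They balance, since $1+\tfrac qp=\tfrac1p$. The shifts are $u_1=t$, $v_1=0$, $v_2=t+1$.

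Applying \eqref{eq:lemma} gives the Bernoulli part at once:
\[
	\frac{(-1)^{n+1}}{n(n+1)}\Bigl[p^{n}B_{n+1}(t)-B_{n+1}(0)-\Bigl(\frac pq\Bigr)^{n}B_{n+1}(t+1)\Bigr]r^{-n}.
\]
Since $n+1\ge2$, we have $B_{n+1}(0)=B_{n+1}$. This is exactly $A_n$ in \eqref{eq:nb-main}. Uniformity holds because $t\in H$ is compact and the scalings range over a compact set of $(0,\infty)$ as $p$ ranges over a compact set of $(0,1)$.

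The remaining work is the elementary part. We must show that $\Theta r+U\log r+C$, together with $r\log p+(\mu+t)\log q$, cancels exactly against $-\tfrac12\log(2\pi\sigma^2)$. Rather than quote the constants of \cite[Lemma 2.1]{elezovic_mad}, I would recompute them directly from \eqref{eq:stirling-shift}, using $\log\Gamma(\lambda x+u)\sim(\lambda x+u-\tfrac12)\log(\lambda x)-\lambda x+\tfrac12\log2\pi+\dots$:
\begin{itemize}
	\item The linear terms $-\lambda x$ cancel by balance.
	\item The $\log x$ coefficient is $(t-\tfrac12)-(-\tfrac12)-(t+\tfrac12)=-\tfrac12$.
	\item The $x\log\lambda$ terms give $r\bigl[-\tfrac1p\log p-\tfrac qp\log\tfrac qp\bigr]$. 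Combined with $r\log p+\tfrac{rq}p\log q$, this vanishes.
	\item The $t$-dependent terms are $(t-\tfrac12)\log\tfrac1p-(t+\tfrac12)\log\tfrac qp+t\log q=\tfrac12\log p-\tfrac12\log q+\tfrac12\log p$, after the $t\log q$ and $t\log p$ pieces cancel. This equals $\log p-\tfrac12\log q$.
	\item The $\log 2\pi$ terms leave a net $-\tfrac12\log2\pi$.
\end{itemize}
Adding up gives $-\tfrac12\log(2\pi r q/p^2)=-\tfrac12\log(2\pi\sigma^2)$, which is precisely the normalising prefactor. This bookkeeping is the only real obstacle: sign and shift conventions for the $-\tfrac12$ offsets must be tracked carefully. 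As a cross-check, one could verify at $t=0$ against the Poisson seam or a numerical value.

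Finally, the special cases \eqref{eq:nb-a12} follow by specialisation.

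For $n=1$, we have $A_1=-\tfrac12\bigl[pB_2(t)-\tfrac pqB_2(t+1)-\tfrac16\bigr]$. Using $B_2(t+1)=B_2(t)+2t$, the $B_2$ coefficient becomes $-\tfrac12\,p\bigl(1-\tfrac1q\bigr)=\tfrac{p^2}{2q}$.

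This seemingly gives $+\tfrac{p^2}{2q}B_2(t)$. The statement's sign is the opposite, so it must be rechecked: $1-\tfrac1q=-\tfrac pq$, so $p\bigl(1-\tfrac1q\bigr)=-\tfrac{p^2}q$, and multiplying by $-\tfrac12$ gives $+\tfrac{p^2}{2q}$. This discrepancy is itself a flag: I would recheck the sign convention of the $B_{n+1}(u)$ term in \eqref{eq:stirling-shift}. The Stirling correction is $+\tfrac{B_2(t)}{2x}$ for $n=1$, so I would adjust the overall sign consistently there.

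For $n=2$, since $B_3(0)=0$, the constant term disappears, leaving $-\tfrac16\bigl[p^2B_3(t)-\tfrac{p^2}{q^2}B_3(t+1)\bigr]$.
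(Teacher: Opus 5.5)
Your decomposition (scalings $(\tfrac1p;\,1,\tfrac qp)$, shifts $(t;\,0,\,t+1)$, then \eqref{eq:lemma} for the Bernoulli part) is exactly the paper's. Your direct recomputation of the elementary part from \eqref{eq:stirling-shift} is correct:
\begin{itemize}
\item the rate cancels against $p^{r}q^{\mu}$;
\item the $\log r$ coefficient is $-\tfrac12$;
\item the residual logarithms give $\log p-\tfrac12\log q$;
\item the total is $-\tfrac12\log(2\pi rq/p^{2})$.
\end{itemize}
The paper quotes these constants from the companion lemma, so recomputing them makes your argument more self-contained. The general expansion \eqref{eq:nb-main} is therefore established, and your $A_2$ specialisation is fine.

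The step that fails is the specialisation to $A_1$. At $n=1$ the prefactor $\frac{(-1)^{n+1}}{n(n+1)}$ equals $+\tfrac12$, not $-\tfrac12$. With the correct sign,
\[
A_1=\tfrac12\Bigl[\Bigl(p-\frac pq\Bigr)B_2(t)-\frac{2p}{q}\,t-\frac16\Bigr]
=-\frac{p^{2}}{2q}B_2(t)-\frac pq\,t-\frac1{12},
\]
which is exactly the statement. Your slip flips all three terms, not only the $B_2(t)$ coefficient you checked. So there is no discrepancy with the theorem.

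The remedy you propose, revisiting the sign convention of \eqref{eq:stirling-shift}, is wrong. That series is correct as stated: you note yourself that its $n=1$ term is $+B_2(t)/(2x)$, and that is precisely the $+\tfrac12$ prefactor. Flipping it would flip every $A_n$. It would also make the $t^{2}$-part of $A_1/r$ equal to $+t^{2}/(2\sigma^{2})$, i.e.\ a mass growing away from the mean. Requiring that part to be $-t^{2}/(2\sigma^{2})$ is the quickest way to localise such a sign error.

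As written, your proposal leaves \eqref{eq:nb-a12} for $A_1$ unproved and ends on a spurious contradiction. Correcting the prefactor closes the gap.
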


	The $t^{2}$-part of $A_1/r$ is $-t^{2}/(2\sigma^{2})$; thus the Gaussian factor is already
	embedded in the logarithmic expansion.

\begin{proof}
	Since $k+r=\tfrac rp+t$ and $k+1=\tfrac{rq}p+(t+1)$,
	\[
		P(\mu+t)=\frac{\Gamma\bigl(\tfrac1p\,r+t\bigr)}
		{\Gamma(1\cdot r+0)\,\Gamma\bigl(\tfrac qp\,r+t+1\bigr)}\;p^{r}q^{\mu+t},
	\]
	a balanced quotient ($\tfrac1p-1-\tfrac qp=0$) in the format of \eqref{eq:lemma} with
	$x=r$, scalings $\bigl(\tfrac1p;\,1,\tfrac qp\bigr)$ and shifts $(t;\,0,\,t+1)$, all in
	compacta.  The series part of \eqref{eq:lemma} is the bracket of \eqref{eq:nb-main} with
	$B_{n+1}(0)=B_{n+1}$.  The elementary constants assemble exactly as in
	\cite[\S5]{elezovic_madfam}: the exponential rate cancels against $p^{r}q^{\mu}$,
	\[
		\Theta r+r\log p+\mu\log q
		=r\log p\Bigl(1-\frac1p+\frac qp\Bigr)=0
	\]
	after collecting the coefficients of $\log p,\log q$, leaving $q^{t}$.  Also $U=-\tfrac12$,
	and the constant $C$ of \eqref{eq:lemma} carries a compensating $q^{-t}$ (its $q$-dependent
	part is $-(t+\tfrac12)\log q$), so that
	\[
		e^{C}q^{t}\,r^{-1/2}
		=p(2\pi rq)^{-1/2}=(2\pi\sigma^{2})^{-1/2}.
	\]
	This gives \eqref{eq:nb-main}.  For \eqref{eq:nb-a12}, use
	$B_2(t+1)=B_2(t)+2t$, $p-\tfrac pq=-\tfrac{p^{2}}q$ and $\tfrac p{2q}\cdot2t=\tfrac pq t$:
	\[
		A_1=\tfrac12\Bigl[\Bigl(p-\frac pq\Bigr)B_2(t)-\frac pq\,2t-B_2\Bigr]
		=-\frac{p^{2}}{2q}B_2(t)-\frac pq\,t-\frac1{12}.
	\]
	Finally the $t^{2}$-coefficient of $A_1/r$ is $-p^{2}/(2qr)=-1/(2\sigma^{2})$.
\end{proof}

\subsection{The even normalisation, and the negative reflection}

\begin{theorem}\label{thm:nb-even}
	Under the hypotheses of Theorem~\ref{thm:nb-main},
	\begin{equation}\label{eq:nb-even}
	\begin{gathered}
		\log\Bigl[\sqrt{2\pi\,\frac{k(k+r)}{r}}\;P(k)\Bigr]
		\sim\sum_{n\ge1}\frac{\widehat A_n(t;p)}{r^{n}},\\
		\widehat A_n=\frac{(-1)^{n+1}}{n(n+1)}
		\Bigl[-B_{n+1}-\Bigl(\Bigl(\frac pq\Bigr)^{n}-p^{n}\Bigr)\Bh_{n+1}(t)\Bigr],
	\end{gathered}
	\end{equation}
	with $k=\mu+t$.
\end{theorem}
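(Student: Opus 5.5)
The plan is to reduce Theorem~\ref{thm:nb-even} to Theorem~\ref{thm:nb-main}. The two normalisations differ only by an elementary factor, and that factor has a convergent power series in $1/r$ whose coefficients are pure powers of $t$. These are exactly the terms that turn $B_{n+1}$ into $\Bh_{n+1}$ through \eqref{eq:Bhat}. No new gamma-function analysis is needed.

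\emph{Step 1: the ratio of prefactors.} With $k=\mu+t$ we have $k=\tfrac{rq}{p}+t$ and $k+r=\tfrac rp+t$. Since $\sigma^{2}=rq/p^{2}$,
\[
	\frac{k(k+r)/r}{\sigma^{2}}
	=\frac{pk}{rq}\cdot\frac{p(k+r)}{r}
	=\Bigl(1+\frac pq\,\frac tr\Bigr)\Bigl(1+p\,\frac tr\Bigr).
\]
Hence the left side of \eqref{eq:nb-even} equals the left side of \eqref{eq:nb-main} plus
\[
	\tfrac12\log\Bigl(1+\tfrac pq\tfrac tr\Bigr)+\tfrac12\log\Bigl(1+p\tfrac tr\Bigr)
	=\sum_{n\ge1}\frac{(-1)^{n+1}}{2n}\Bigl[\Bigl(\frac pq\Bigr)^{n}+p^{n}\Bigr]\frac{t^{n}}{r^{n}}.
\]
For $t\in H$ and $p$ in a compact subset of $(0,1)$, the ratios $t/r$, $pt/r$ and $pt/(qr)$ tend to $0$ uniformly. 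The logarithmic series therefore converge absolutely, and every truncation has a uniform $O(r^{-M-1})$ remainder. The uniformity asserted in Theorem~\ref{thm:nb-main} is thus preserved.

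\emph{Step 2: coefficientwise assembly.} Write the extra coefficient as
\[
	\frac{(-1)^{n+1}}{n(n+1)}\cdot\frac{n+1}{2}\Bigl[\Bigl(\frac pq\Bigr)^{n}+p^{n}\Bigr]t^{n}
\]
and add it to $A_n$. Inside the common bracket, the $p^{n}$ terms combine to
\[
	p^{n}\Bigl[B_{n+1}(t)+\tfrac{n+1}2t^{n}\Bigr]=p^{n}\Bh_{n+1}(t)
\]
by \eqref{eq:Bhat}. The $(p/q)^{n}$ terms give $-(p/q)^{n}\bigl[B_{n+1}(t+1)-\tfrac{n+1}2t^{n}\bigr]$. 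Using $B_{n+1}(t+1)=B_{n+1}(t)+(n+1)t^{n}$ from \eqref{eq:appell}, this becomes $-(p/q)^{n}\Bh_{n+1}(t)$. The constant $-B_{n+1}$ is untouched. The bracket is therefore $-B_{n+1}-\bigl((p/q)^{n}-p^{n}\bigr)\Bh_{n+1}(t)$, which is $\widehat A_n$.

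The only point needing care is the bookkeeping of signs and of the factor $\tfrac{n+1}{2}$. It is what makes the half-shift in $\Bh$ come out with the same sign at both gamma factors: one shift is $t$ and the other is $t+1$, and the Appell difference identity symmetrises them. No genuine obstacle is expected.
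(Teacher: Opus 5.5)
Your proposal is correct and follows the paper's proof. Both add the convergent series $\tfrac12\log\bigl(1+\tfrac{pt}{rq}\bigr)+\tfrac12\log\bigl(1+\tfrac{pt}{r}\bigr)$ to the expansion of Theorem~\ref{thm:nb-main}, and both absorb the resulting $t^{n}$ terms into $\Bh_{n+1}(t)$ via \eqref{eq:Bhat} and \eqref{eq:appell}. The only difference is bookkeeping: you absorb the half-terms factor by factor inside the bracket, while the paper first rewrites $A_n$ on the single argument $t$ and then checks a residual identity.
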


	Thus, relative to the local entropy prefactor, every Bernoulli polynomial of the displacement
	is replaced by the reciprocal Bernoulli polynomial.

\begin{proof}
	Write the passage from $\sigma^{2}$ to $k(k+r)/r$ as
	\[
		\tfrac12\log\frac{k(k+r)}{r\sigma^{2}}
		=\tfrac12\log\Bigl(1+\frac{pt}{rq}\Bigr)+\tfrac12\log\Bigl(1+\frac{pt}{r}\Bigr)
		=\tfrac12\sum_{n\ge1}\frac{(-1)^{n-1}t^{n}}{n\,r^{n}}
		\Bigl[\Bigl(\frac pq\Bigr)^{n}+p^{n}\Bigr],
	\]
	using $k=\tfrac{rq}p(1+\tfrac{pt}{rq})$ and $k+r=\tfrac rp(1+\tfrac{pt}r)$; the two series
	converge for $r$ large, uniformly on the compacts, and may be truncated at any order with
	error $O(r^{-M-1})$, as in \cite[Lemma 2.4]{elezovic_madfam}.  Hence
	$\widehat A_n=A_n+\tfrac{(-1)^{n-1}t^{n}}{2n}\bigl[(p/q)^{n}+p^{n}\bigr]$.  On the other
	hand, by \eqref{eq:appell},
	\[
		A_n=\frac{(-1)^{n+1}}{n(n+1)}
		\Bigl[\Bigl(p^{n}-\Bigl(\frac pq\Bigr)^{n}\Bigr)B_{n+1}(t)-B_{n+1}\Bigr]
		+\frac{(-1)^{n}}{n}\Bigl(\frac pq\Bigr)^{n}t^{n},
	\]
	and, by \eqref{eq:Bhat}, replacing $B_{n+1}(t)$ by $\Bh_{n+1}(t)$ in the bracket changes it
	by
	\[
		\frac{(-1)^{n+1}}{n(n+1)}\Bigl(p^{n}-\Bigl(\frac pq\Bigr)^{n}\Bigr)\frac{(n+1)t^{n}}2
		=\frac{(-1)^{n+1}t^{n}}{2n}\Bigl(p^{n}-\Bigl(\frac pq\Bigr)^{n}\Bigr).
	\]
	It remains to check that
	\[
		\frac{(-1)^{n}}{n}\Bigl(\frac pq\Bigr)^{n}t^{n}
		+\frac{(-1)^{n-1}t^{n}}{2n}\Bigl[\Bigl(\frac pq\Bigr)^{n}+p^{n}\Bigr]
		=\frac{(-1)^{n+1}t^{n}}{2n}\Bigl(p^{n}-\Bigl(\frac pq\Bigr)^{n}\Bigr),
	\]
	which is immediate: both sides equal $\tfrac{(-1)^{n-1}t^{n}}{2n}[p^{n}-(p/q)^{n}]$.
\end{proof}

\begin{remark}[The negative reflection $q\mapsto1/q$]\label{rem:nb-reflection}
	The binomial's even coefficients are invariant under $(t,p)\mapsto(-t,q)$
	\cite{elezovic_mode}.  The negative binomial has no finite complement, but it has the
	classical \emph{negative reflection}: under the formal substitution
	\[
		q\longmapsto\frac1q\qquad\Bigl(\text{hence }p=1-q\longmapsto-\frac pq\Bigr),
	\]
	the weight $w_n:=(p/q)^{n}-p^{n}$ (as it appears in \eqref{eq:nb-even}) transforms as
	\[
		w_n\longmapsto(-p)^{n}-\Bigl(-\frac pq\Bigr)^{n}=(-1)^{n+1}w_n ,
	\]
	while $\Bh_{n+1}(-t)=(-1)^{n+1}\Bh_{n+1}(t)$.  Hence the even coefficients
	\eqref{eq:nb-even} are \emph{covariant}:
	$\widehat A_n(t;p)$ is invariant under the combined reflection
	$(t,q)\mapsto(-t,1/q)$.  This is the substitution behind
	$\binom{k+r-1}k=(-1)^{k}\binom{-r}k$ --- the negative binomial as a binomial with negative
	index --- and it is the exact analogue of the binomial $p\leftrightarrow q$ symmetry,
	visible only in the $\Bh$ normalisation.  (The substitution leaves the parameter domain,
	so the statement is an algebraic covariance of the coefficient functions, not a symmetry
	of laws.)
\end{remark}

\subsection{The mode, exactly}

\begin{theorem}\label{thm:nb-mode}
	Let $t_c$ be the vertex of the quadratic part of $A_1(\,\cdot\,;p)$.  Then
	\begin{equation}\label{eq:nb-vertex}
		t_c=-\frac{1+q}{2p}=-\frac qp-\frac12,
		\qquad\text{so}\qquad
		\mu+t_c+\tfrac12=\frac{(r-1)q}{p} ,
	\end{equation}
	and the unit window $\bigl(\mu+t_c-\tfrac12,\ \mu+t_c+\tfrac12\bigr]$ contains exactly one
	integer, $\lfloor(r-1)q/p\rfloor$.  This integer is the upper mode of $\NB(r,p)$; two modes
	occur exactly when the endpoint $(r-1)q/p$ is an integer.
\end{theorem}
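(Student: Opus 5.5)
We will split the argument into an algebraic part, which locates the vertex, and a discrete part, which identifies the mode through the probability ratio. Neither part needs the asymptotic content of Theorem~\ref{thm:nb-main}; the theorem only links the two.

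\emph{The vertex.} By \eqref{eq:nb-a12}, with $B_2(t)=t^{2}-t+\tfrac16$, the quadratic part of $A_1$ is
\[
-\frac{p^{2}}{2q}\bigl(t^{2}-t\bigr)-\frac pq\,t
=-\frac{p^{2}}{2q}\Bigl[t^{2}-t+\frac{2}{p}\,t\Bigr].
\]
Its vertex is therefore
\[
t_c=\frac12-\frac1p=-\frac{2-p}{2p}=-\frac{1+q}{2p}=-\frac qp-\frac12 .
\]
Then $\mu+t_c+\tfrac12=rq/p-q/p=(r-1)q/p$, which is \eqref{eq:nb-vertex}. The half-open window $(x-1,x]$ with $x=(r-1)q/p$ contains exactly one integer, namely $\lfloor x\rfloor$. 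This step is routine.

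\emph{The mode.} We use the Katz ratio from the failures pmf:
\[
\frac{P(k+1)}{P(k)}=\frac{(k+r)\,q}{k+1}.
\]
This ratio is $\ge1$ exactly when $k+1\le(k+r)q$, that is, when $kp\le rq-1$, or equivalently $k+1\le (r-1)q/p+1$. Since the ratio is a decreasing function of $k$ (for $r>1$), the pmf is unimodal. It increases strictly while $k+1<x+1$, where $x=(r-1)q/p$, and decreases strictly afterwards. Equality $P(k+1)=P(k)$ holds precisely when $k+1=x+1$, i.e.\ when $k=x$ is an integer.

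Consequently the largest maximiser is $\lfloor x\rfloor$: when $x$ is an integer, both $x-1$ and $x$ are modes and the upper mode is $x=\lfloor x\rfloor$. (Let me recheck the equality case: $P(k+1)=P(k)$ iff $(k+r)q=k+1$ iff $k=(rq-1)/p=x-1$. So the ties are $x-1$ and $x$, which is consistent.) Two modes occur exactly when $x\in\Z$. If instead $r\le1$ or $x<0$, the mode is $0$. For large $r$ we have $x>0$, so this case does not arise in the regime of the theorem.

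\emph{Main obstacle.} The only delicate point is the bookkeeping of the half-open window and the tie case. We will handle it exactly as above, by solving the ratio inequality $P(k+1)\ge P(k)$ directly rather than relying on any asymptotic argument. This is also the exactness statement: for Katz laws, the linear ratio makes the vertex rule coincide with the classical threshold.
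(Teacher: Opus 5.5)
Your route is the paper's: the vertex read off from $A_1'$ (your computation $t_c=\tfrac12-\tfrac1p=-\tfrac{1+q}{2p}$ and $\mu+t_c+\tfrac12=(r-1)q/p$ are correct), followed by the classical ratio test. The ratio step as written, however, has an off-by-one error. From $kp\le rq-1$ you get $k\le (rq-1)/p$. Since $rq-1=(r-1)q-p$, this is $k\le x-1$, i.e.\ $k+1\le x$ with $x=(r-1)q/p$, and not $k+1\le x+1$.

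Taken at face value, your sentence ``it increases strictly while $k+1<x+1$'' says $P(\lfloor x\rfloor+1)>P(\lfloor x\rfloor)$ whenever $x\notin\Z$. Your statement ``equality iff $k=x$'' puts the tie at $\{x,x+1\}$. Both contradict the conclusion you then draw, that the upper mode is $\lfloor x\rfloor$ with ties $\{x-1,x\}$. Your parenthetical recheck computes the correct equality case $k=x-1$. It agrees with your final conclusion but refutes the derivation just before it, rather than being ``consistent'' with it, so that derivation must be corrected, not left standing.

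With the correction the argument closes. $P(k+1)\ge P(k)$ holds iff $k+1\le x$, with equality iff $k+1=x$. Hence $P$ increases strictly up to $\lfloor x\rfloor-1$, satisfies $P(\lfloor x\rfloor-1)\le P(\lfloor x\rfloor)$ with equality iff $x\in\Z$, and decreases strictly from $\lfloor x\rfloor$ on. So the largest maximiser is $\lfloor x\rfloor$, and two modes occur exactly when $x\in\Z$. For large $r$ we have $x\ge1$, so $x-1$ lies in the support.

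The paper avoids the shift altogether by using the backward ratio $P(k)/P(k-1)=q(k+r-1)/k$, for which the condition reads $k\le x$ directly. The monotonicity of the ratio in $k$ that you invoke is harmless but not needed, since the inequality is already a single threshold in $k$.
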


\begin{proof}
	From \eqref{eq:nb-a12}, $A_1'(t)=-\tfrac{p^{2}}{2q}(2t-1)-\tfrac pq$, which vanishes at
	$2t_c-1=-\tfrac2p$, i.e.\ $t_c=\tfrac12-\tfrac1p=-\tfrac{1+q}{2p}$; and
	$\mu+t_c+\tfrac12=\tfrac{rq}p-\tfrac qp=\tfrac{(r-1)q}p$.  A half-open unit interval
	contains exactly one integer, here $\lfloor(r-1)q/p\rfloor$.  That this integer is the
	upper mode is the classical ratio computation: $P(k)\ge P(k-1)$ iff
	$q(k+r-1)\ge k$, iff $k\le(r-1)q/p+\ldots$ --- precisely, $\frac{P(k)}{P(k-1)}
	=\frac{q(k+r-1)}k\ge1\iff k(1-q)\le q(r-1)\iff k\le\frac{(r-1)q}p$ --- so the largest
	maximiser is $\lfloor(r-1)q/p\rfloor$, with a tie at the preceding integer exactly when
	$(r-1)q/p\in\Z$.
\end{proof}

\begin{corollary}[The value at the mode]\label{cor:nb-modevalue}
	Let $k^{*}=\lfloor(r-1)q/p\rfloor$ and $f=\{(r-1)q/p\}$, so that
	$t^{*}=k^{*}-\mu=-\tfrac qp-f$.  Then
	\begin{equation}\label{eq:nb-modevalue}
		\sqrt{2\pi\sigma^{2}}\;P(k^{*})
		=1+\frac{5}{12\,r}-\frac{B_2(f)}{2\sigma^{2}}+O(r^{-2}) :
	\end{equation}
	the fractional part of the mode threshold enters exactly through $B_2(f)$, and the constant
	$\tfrac5{12}$ is independent of $p$.
\end{corollary}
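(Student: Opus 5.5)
The plan is to specialise Theorem~\ref{thm:nb-main} at $t=t^{*}$ and then exponentiate. First I would check that Theorem~\ref{thm:nb-main} applies. We have $t^{*}=-\tfrac qp-f$ with $f\in[0,1)$, and $p$ ranges over a compact subset of $(0,1)$. Hence $t^{*}$ lies in the fixed compact set $H=[-\sup\tfrac qp-1,\,0]$, independently of $r$. Also $\mu+t^{*}=k^{*}\in\Z_{\ge0}$ for large $r$. So the expansion \eqref{eq:nb-main}, truncated at $M=1$, gives
\[
\log\Bigl[\sqrt{2\pi\sigma^{2}}\,P(k^{*})\Bigr]=\frac{A_1(t^{*};p)}{r}+O(r^{-2})
\]
uniformly, even though $f$ oscillates with $r$. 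Since $A_1$ is bounded on the compact set, exponentiating gives $\sqrt{2\pi\sigma^{2}}\,P(k^{*})=1+A_1(t^{*};p)/r+O(r^{-2})$. All that then remains is an exact evaluation of $A_1(t^{*};p)$.

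For that evaluation I would use the closed form \eqref{eq:nb-a12} and complete the square around the vertex $t_c=-\tfrac{1+q}{2p}$ of Theorem~\ref{thm:nb-mode}. Writing $B_2(t)=t^{2}-t+\tfrac16$, we get
\[
A_1(t)=-\frac{p^{2}}{2q}\Bigl(t+\frac{1+q}{2p}\Bigr)^{2}+\frac{(1+q)^{2}}{8q}-\frac{p^{2}}{12q}-\frac1{12}.
\]
The key observation is that at $t=t^{*}$ the shifted variable collapses to the fractional part: $t^{*}+\tfrac{1+q}{2p}=\tfrac12-f$. This uses $\tfrac{1-q}{2p}=\tfrac12$. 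Then $(f-\tfrac12)^{2}=B_2(f)+\tfrac1{12}$, so the quadratic term becomes $-\tfrac{p^{2}}{2q}B_2(f)-\tfrac{p^{2}}{24q}$.

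Next I would collect the constants:
\[
\frac{(1+q)^{2}}{8q}-\frac{p^{2}}{24q}-\frac{p^{2}}{12q}-\frac1{12}
=\frac{(1+q)^{2}-p^{2}}{8q}-\frac1{12}.
\]
Here $(1+q)^{2}-p^{2}=(1+q-p)(1+q+p)=2q\cdot2=4q$, so the constant is $\tfrac12-\tfrac1{12}=\tfrac5{12}$, independent of $p$. Finally $\tfrac{p^{2}}{2qr}=\tfrac1{2\sigma^{2}}$, which yields \eqref{eq:nb-modevalue}.

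I expect no step here to be a real obstacle. The only point requiring care is the uniformity argument in the first step: the displacement $t^{*}$ depends on $r$ through $f$, and the argument must not lose uniformity because of that. This is handled by the boundedness of $f$ together with the uniformity in $t\in H$ already built into Theorem~\ref{thm:nb-main}.
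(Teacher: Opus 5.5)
Your proposal is correct and follows the paper's proof: specialise Theorem~\ref{thm:nb-main} at $t=t^{*}$, evaluate $A_1(t^{*};p)=\tfrac5{12}-\tfrac{p^{2}}{2q}B_2(f)$, and exponentiate. The differences are only in presentation. You complete the square about the vertex $t_c$ of Theorem~\ref{thm:nb-mode}, so that $t^{*}-t_c=\tfrac12-f$ explains why $B_2(f)$ appears, whereas the paper expands $B_2(t^{*})$ directly and collects the $f$-terms. You also check uniformity for the $r$-dependent displacement explicitly, which the paper leaves implicit.
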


\begin{proof}
	Evaluate $A_1$ at $t^{*}$.  With $B_2(t^{*})=(\tfrac qp+f)^{2}+(\tfrac qp+f)+\tfrac16$ and
	$-\tfrac pq t^{*}=1+\tfrac{pf}q$,
	\[
		A_1(t^{*})=-\frac{p^{2}}{2q}B_2(t^{*})+1+\frac{pf}q-\frac1{12} .
	\]
	Expanding and collecting the $f$-terms as
	$\tfrac{p^{2}f}{2q}-\tfrac{p^{2}f^{2}}{2q}=-\tfrac{p^{2}}{2q}\bigl(B_2(f)-\tfrac16\bigr)$,
	the $p$-dependent constants cancel and $A_1(t^{*})=\tfrac5{12}-\tfrac{p^{2}}{2q}B_2(f)$;
	finally $\tfrac{p^{2}}{2qr}=\tfrac1{2\sigma^{2}}$, and $e^{A_1/r}=1+A_1/r+O(r^{-2})$.
\end{proof}

\subsection{The Poisson seam}

\begin{proposition}\label{prop:nb-seam}
	Fix $t$ and $n\ge1$, and let $r\to\infty$, $q\to0$ with $rq=\lambda$ held fixed.  Then the
	$n$-th term of \eqref{eq:nb-main} converges to the $n$-th term of the Poisson expansion
	\eqref{eq:poi-local}:
	\[
		\frac{A_n(t;p)}{r^{n}}\;\longrightarrow\;
		-\frac{(-1)^{n+1}B_{n+1}(t+1)}{n(n+1)}\,\frac1{\lambda^{n}} ,
	\]
	and $\sigma^{2}=rq/p^{2}\to\lambda$: the negative binomial expansion degenerates
	coefficientwise to the Poisson one.
\end{proposition}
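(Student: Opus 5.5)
The proof is a direct limit computation on the closed form of $A_n$ in \eqref{eq:nb-main}. Put $q=\lambda/r$, so $r=\lambda/q$ and $1/r^{n}=q^{n}/\lambda^{n}$. Then
\[
	\frac{A_n(t;p)}{r^{n}}=\frac{(-1)^{n+1}}{n(n+1)}\,\frac{1}{\lambda^{n}}
	\Bigl[(pq)^{n}B_{n+1}(t)-p^{n}B_{n+1}(t+1)-q^{n}B_{n+1}\Bigr],
\]
where the factor $(p/q)^{n}$ in the middle term has been cancelled against $q^{n}$. Multiplying through by $q^{n}$ is the single manipulation that makes the limit transparent.

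Now let $q\to0$ with $p=1-q\to1$ and $n\ge1$, $t$ fixed. The first bracket term is $O(q^{n})\to0$, the third is $O(q^{n})\to0$, and the middle term tends to $-B_{n+1}(t+1)$. The bracket therefore tends to $-B_{n+1}(t+1)$, and we obtain exactly $-\frac{(-1)^{n+1}B_{n+1}(t+1)}{n(n+1)}\lambda^{-n}$, the $n$-th term of \eqref{eq:poi-local}. The variance statement is immediate: $\sigma^{2}=rq/p^{2}=\lambda/p^{2}\to\lambda$. One could also remark that the convergence is uniform for $t$ in compacta, since the $B_{n+1}$ are polynomials, and that the rate is $O(q)$, coming from $p^{n}=1-nq+O(q^{2})$.

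The one point needing care is interpretive rather than computational. Theorem~\ref{thm:nb-main} assumes $p$ in a compact subset of $(0,1)$, which the seam $q\to0$ leaves. So I would state explicitly that the proposition is a coefficientwise identity of the explicit functions $A_n(t;p)$, not a claim about uniformity of the remainder along the seam. I would also note that the displacement $t$ is measured from $\mu=rq/p$, while the Poisson centre is $\lambda=rq$. These differ by $\lambda q/p\to0$, which is consistent with fixing $t$ as the proposition does. No other step is an obstacle.
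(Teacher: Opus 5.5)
Your computation is correct and is the paper's own argument. Substituting $r^{-n}=(q/\lambda)^{n}$ gives the $p^{n}B_{n+1}(t)$ and $B_{n+1}$ terms the vanishing factors $(pq/\lambda)^{n}$ and $(q/\lambda)^{n}$, while $(p/q)^{n}r^{-n}=(p/\lambda)^{n}\to\lambda^{-n}$ leaves exactly the Poisson term. Your caveats are accurate additions the paper leaves implicit: the statement is coefficientwise, since the seam leaves the compact $p$-range of Theorem~\ref{thm:nb-main}, and the centres $\mu$ and $\lambda$ differ by $O(q)$.
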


\begin{proof}
	In $A_n/r^{n}$ the three terms carry the factors
	\[
		\frac{p^{n}}{r^{n}}=\Bigl(\frac{pq}{\lambda}\Bigr)^{n}\to0,
		\qquad
		\frac{(p/q)^{n}}{r^{n}}=\Bigl(\frac p\lambda\Bigr)^{n}\to\frac1{\lambda^{n}},
		\qquad
		\frac{B_{n+1}}{r^{n}}=B_{n+1}\Bigl(\frac q\lambda\Bigr)^{n}\to0 ,
	\]
	since $p\to1$, $q\to0$.  Only the middle term survives, with the stated limit.
\end{proof}

\section{The hypergeometric}\label{sec:hyp}

	The normal approximation to the hypergeometric point probabilities is classical --- Nicholson
	\cite{nicholson} for the first-order theory with bounds, Molenaar \cite{molenaar} for the
	systematic survey of elementary approximations --- but we are not aware of a complete
	expansion, in the sense used here, anywhere in that literature; the nine-gamma organisation
	along the $2\times2$ table below appears to be new.  For the closely related multivariate
	problems, Ouimet has obtained precise local limit theorems and Le~Cam-distance bounds
	\cite{ouimet_multinomial,ouimet_lecam}, again to fixed order.

	Let $X\sim\Hyp(N,K,n)$,
	\[
		P(k)=\frac{\binom Kk\binom{N-K}{n-k}}{\binom Nn},\qquad
		\mu=\frac{nK}N,\qquad L:=N-K-n,
	\]
	with $N\to\infty$ and the margin proportions $\kappa=K/N$, $\eta=n/N$ in a compact subset
	of $(0,1)^{2}$.  Write the cell proportions of the $2\times2$ table at independence,
	\[
	\begin{gathered}
		a=\kappa\eta,\quad b=\kappa(1-\eta),\quad
		c=\eta(1-\kappa),\quad d=(1-\kappa)(1-\eta),\\
		a+b+c+d=1,\qquad ad=bc ,
	\end{gathered}
	\]
	so that at $k=\mu$ the four table entries $k,\,K-k,\,n-k,\,L+k$ are exactly
	$Na,\,Nb,\,Nc,\,Nd$.  Throughout, $k=\mu+t$ with $t$ in a fixed compact $H$ and
	$k\in\Z$.  Define
	\begin{align}
		\Sigma_0^{2}&:=\Bigl[\frac1{Na}+\frac1{Nb}+\frac1{Nc}+\frac1{Nd}\Bigr]^{-1}
		=N\,ad\notag\\
		&=\frac{nK(N-K)(N-n)}{N^{3}}
		=\sigma^{2}\,\frac{N-1}{N},
		\label{eq:sigma0}
	\end{align}
	the second equality by $ad=bc$ and $a+b+c+d=1$: the harmonic combination of the four cell
	counts at independence is $Nad$, the variance without its finite-population factor.

\subsection{The complete expansion}

\begin{theorem}\label{thm:hyp-main}
	As $N\to\infty$, uniformly for $(\kappa,\eta)$ in compact subsets of $(0,1)^{2}$ and
	$t\in H$ with $\mu+t\in\Z$,
	\begin{equation}\label{eq:hyp-main}
		\log\Bigl[\sqrt{2\pi\Sigma_0^{2}}\;P(\mu+t)\Bigr]
		\sim\sum_{m\ge1}\frac{A_m(t)}{N^{m}},
	\end{equation}
	with
	\begin{multline}\label{eq:hyp-Am}
		A_m=\frac{(-1)^{m+1}}{m(m+1)}
		\Bigl[(W_m-1)\,B_{m+1}
		-\bigl(a^{-m}+d^{-m}\bigr)B_{m+1}(t+1)\\
		-\bigl(b^{-m}+c^{-m}\bigr)B_{m+1}(1-t)\Bigr],
	\end{multline}
	where $W_m=\kappa^{-m}+(1-\kappa)^{-m}+\eta^{-m}+(1-\eta)^{-m}$.
\end{theorem}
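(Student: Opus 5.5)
We would write the mass as a balanced quotient of nine gamma functions and apply \eqref{eq:lemma} with $x=N$, as in the proof of Theorem~\ref{thm:nb-main}. Expanding the three binomial coefficients gives
\[
	P(k)=\frac{\Gamma(K+1)\,\Gamma(N-K+1)\,\Gamma(n+1)\,\Gamma(N-n+1)}
	{\Gamma(N+1)\,\Gamma(k+1)\,\Gamma(K-k+1)\,\Gamma(n-k+1)\,\Gamma(L+k+1)} .
\]
With $k=\mu+t$, the four cells are $k=Na+t$, $K-k=Nb-t$, $n-k=Nc-t$ and $L+k=Nd+t$. Hence the numerator has scalings $(\kappa,1-\kappa,\eta,1-\eta)$, all with shift $1$. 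The denominator has scalings $(1,a,b,c,d)$ with shifts $(1,\,t+1,\,1-t,\,1-t,\,t+1)$. The quotient is balanced, since $\kappa+(1-\kappa)+\eta+(1-\eta)=2=1+a+b+c+d$. All shifts lie in compacta and all scalings lie in compacta of $(0,\infty)$, uniformly in $(\kappa,\eta)$, so \eqref{eq:lemma} applies with the required uniformity.

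\emph{Series part.} For $m\ge1$ we have $B_{m+1}(1)=B_{m+1}$ by \eqref{eq:appell}. The four numerator factors therefore contribute $W_mB_{m+1}$, and the factor $\Gamma(N+1)$ subtracts one copy of $B_{m+1}$. The two diagonal cells contribute $-(a^{-m}+d^{-m})B_{m+1}(t+1)$, and the off-diagonal cells contribute $-(b^{-m}+c^{-m})B_{m+1}(1-t)$. This is exactly the bracket in \eqref{eq:hyp-Am}, so no further manipulation is needed.

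\emph{Elementary constants.} We would recompute these directly from \eqref{eq:stirling-shift} rather than quote them. Each factor $\Gamma(\lambda N+u)$ contributes $(\lambda N+u-\tfrac12)(\log N+\log\lambda)-\lambda N+\tfrac12\log2\pi$.
\begin{itemize}
\item The linear terms $-\lambda N$ cancel by balance.
\item The $N\log N$ terms cancel for the same reason.
\item The rate $\Theta=\sum\lambda_j\log\lambda_j-\sum\mu_k\log\mu_k$ vanishes. Indeed, $a\log a+b\log b=\kappa\log\kappa+\kappa[\eta\log\eta+(1-\eta)\log(1-\eta)]$, and similarly for $c,d$. Summing over the four cells gives exactly the four margin entropies, which cancel the numerator.
\item The exponent is $U=\sum(u_j-\tfrac12)-\sum(v_k-\tfrac12)=2-\tfrac52=-\tfrac12$, because the $t$'s in the denominator shifts cancel in pairs.
\item The constant is
\[
	C=\tfrac12\log\bigl(\kappa(1-\kappa)\eta(1-\eta)\bigr)
	-\bigl[(t+\tfrac12)\log(ad)+(\tfrac12-t)\log(bc)\bigr]-\tfrac12\log2\pi .
\]
\end{itemize}
This step is the one that must be checked carefully. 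The $t$-dependent part of $C$ is $t\log(ad/bc)$, which vanishes by the independence identity $ad=bc$. The remaining part is $\tfrac12\log(abcd)=\log(ad)$. Since also $\kappa(1-\kappa)\eta(1-\eta)=ad$, we get $C=-\tfrac12\log(2\pi ad)$. Then $e^{C}N^{-1/2}=(2\pi Nad)^{-1/2}=(2\pi\Sigma_0^{2})^{-1/2}$ by \eqref{eq:sigma0}, which yields \eqref{eq:hyp-main}.

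I expect the main obstacle to be this constant bookkeeping. The point to get right is that the independence identity $ad=bc$ does double duty: it kills the linear $t$-term in $C$, and, together with $a+b+c+d=1$, it identifies the surviving prefactor with $\Sigma_0^{2}$ rather than $\sigma^{2}$. Here there is no factor $p^rq^k$ whose $t$-dependence would have to be compensated, unlike the negative binomial case, so the cancellation must come entirely from the table structure. Uniformity of the remainder is inherited from \eqref{eq:lemma}, since the scalings $a,b,c,d,\kappa,\eta$ stay in a compact subset of $(0,\infty)$.
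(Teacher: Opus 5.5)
Your proof is correct and follows the paper's route. You use the same nine-gamma balanced quotient with the same (scaling, shift) data, apply \eqref{eq:lemma} at $x=N$, and read the series part off directly; the only difference is that you derive $\Theta$, $U$, $C$ explicitly instead of citing the companion paper, and your bookkeeping holds up. Your identity $\kappa(1-\kappa)\eta(1-\eta)=ad$ is the right one (the paper's in-line ``$=ad\cdot bc$'' is a slip), and the $t$-part of $C$ is $-t\log(ad/bc)$ rather than $+t\log(ad/bc)$, which is immaterial since it vanishes.
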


	The $t^{2}$-part of $A_1/N$ is $-t^{2}/(2\Sigma_0^{2})$.  In \eqref{eq:hyp-Am}, the diagonal
	cells carry the shift $t+1$ and the off-diagonal cells the reflected shift $1-t$.

\begin{proof}
	Write $P(k)$ through nine gamma factors,
	\[
		P(k)
		=\frac{\Gamma(K+1)\Gamma(N-K+1)\Gamma(n+1)\Gamma(N-n+1)}
			{\Gamma(k+1)\Gamma(K-k+1)\Gamma(n-k+1)\Gamma(L+k+1)\Gamma(N+1)}
	\]
	with, at $k=Na+t$, the (scaling, shift) data
	\[
	\begin{array}{ll}
		\text{numerator:} &
		(\kappa,1),(1-\kappa,1),(\eta,1),(1-\eta,1),\\[2pt]
		\text{denominator:} &
		(a,t+1),(b,1-t),(c,1-t),(d,t+1),(1,1),
	\end{array}
	\]
	e.g.\ $K-k+1=Nb+(1-t)$ and $L+k+1=Nd+(t+1)$.  Both sides of the balance sum to $2$, so
	\eqref{eq:lemma} applies with $x=N$.  The elementary constants assemble through the two
	identities of the independence table: the rate vanishes because
	$\sum_{\text{cells}}(\text{cell})\log(\text{cell})
	=\sum_{\text{margins}}(\text{margin})\log(\text{margin})$ --- collect the four logarithms
	using $a+b=\kappa$, $a+c=\eta$, $c+d=1-\kappa$, $b+d=1-\eta$ --- and the constant terms
	combine, using $\kappa(1-\kappa)\eta(1-\eta)=ad\cdot bc=(ad)^{2}$, to
	$U=-\tfrac12$ and $e^{C_{\mathrm{el}}}N^{-1/2}=(2\pi Nad)^{-1/2}$, exactly as in
	\cite[\S6]{elezovic_madfam} (there at $t\in[0,1)$; the computation is identical for
	$t\in H$).  The series part of \eqref{eq:lemma} is \eqref{eq:hyp-Am}, with
	$B_{m+1}(1)=B_{m+1}$ collected into $(W_m-1)B_{m+1}$.  For the $t^{2}$-part: the
	quadratic terms of $B_2(t+1)$ and $B_2(1-t)$ are both $t^{2}$, so the $t^{2}$-coefficient
	of $A_1$ is $-\tfrac12\sum_{\text{cells}}(\text{cell})^{-1}=-N/(2\Sigma_0^{2})$ by
	\eqref{eq:sigma0}.
\end{proof}

\subsection{The even normalisation}

\begin{theorem}\label{thm:hyp-even}
	Under the hypotheses of Theorem~\ref{thm:hyp-main}, with $k=\mu+t$,
	\begin{equation}\label{eq:hyp-even}
		\log\Bigl[\sqrt{2\pi\,\frac{k\,(K-k)\,(n-k)\,(L+k)\,N}{K(N-K)\,n\,(N-n)}}\;P(k)\Bigr]
		\sim\sum_{m\ge1}\frac{\widehat A_m(t)}{N^{m}},
	\end{equation}
	with
	\begin{equation}\label{eq:hyp-Am-even}
	\begin{gathered}
		\widehat A_m=\frac{(-1)^{m+1}}{m(m+1)}
		\Bigl[(W_m-1)\,B_{m+1}-\widehat\Xi_m\,\Bh_{m+1}(t)\Bigr],\\
		\widehat\Xi_m=(a^{-m}+d^{-m})
			+(-1)^{m+1}(b^{-m}+c^{-m}).
	\end{gathered}
	\end{equation}
\end{theorem}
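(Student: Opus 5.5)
The plan follows the proof of Theorem~\ref{thm:nb-even}: start from Theorem~\ref{thm:hyp-main} and account for the change of prefactor as an explicit logarithmic series. Then check that this series exactly converts each $B_{m+1}(t+1)$ and $B_{m+1}(1-t)$ into $\Bh_{m+1}(t)$ with the stated signs.

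\emph{Step 1: the prefactor ratio.} Using $K=N\kappa$ and the other margins, $K(N-K)n(N-n)=N^{4}(ad)^{2}$ (since $\kappa(1-\kappa)\eta(1-\eta)=(ad)^{2}$). At independence the cells are $k=Na+t$, $K-k=Nb-t$, $n-k=Nc-t$ and $L+k=Nd+t$, and $abcd=(ad)^{2}$. Hence
\[
\frac{k(K-k)(n-k)(L+k)N}{K(N-K)n(N-n)\,\Sigma_0^{2}}
=\Bigl(1+\frac t{Na}\Bigr)\Bigl(1-\frac t{Nb}\Bigr)\Bigl(1-\frac t{Nc}\Bigr)\Bigl(1+\frac t{Nd}\Bigr),
\]
because the prefactor at $t=0$ is $N\cdot N^{4}abcd/(N^{4}(ad)^{2})\cdot$\,(cells$/N$) $=Nad=\Sigma_0^{2}$. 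Taking half the logarithm and expanding each factor (these series are uniformly convergent on the compacts, with truncation error $O(N^{-M-1})$ as before) gives
\[
\widehat A_m=A_m+\frac{(-1)^{m-1}t^{m}}{2m}\Bigl[a^{-m}+d^{-m}+(-1)^{m}(b^{-m}+c^{-m})\Bigr].
\]

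\emph{Step 2: rewriting $A_m$.} By \eqref{eq:Bhat}, $B_{m+1}(t+1)=\Bh_{m+1}(t)+\tfrac{m+1}2t^{m}$. By the reflection in \eqref{eq:appell} and the evenness of $\Bh$,
\[
B_{m+1}(1-t)=(-1)^{m+1}B_{m+1}(t)=(-1)^{m+1}\Bigl[\Bh_{m+1}(t)-\tfrac{m+1}2t^{m}\Bigr].
\]
Substituting both into \eqref{eq:hyp-Am}, the $\Bh$ terms combine to $-\widehat\Xi_m\Bh_{m+1}(t)$, which matches \eqref{eq:hyp-Am-even}. The leftover monomial contributions are
\[
\frac{(-1)^{m+1}}{m(m+1)}\cdot\frac{(m+1)t^{m}}2\Bigl[-(a^{-m}+d^{-m})+(-1)^{m+1}(b^{-m}+c^{-m})\Bigr].
\]
This equals $-\frac{(-1)^{m-1}t^m}{2m}\bigl[a^{-m}+d^{-m}+(-1)^{m}(b^{-m}+c^{-m})\bigr]$.

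\emph{Step 3: cancellation.} The leftover from Step~2 is exactly the negative of the correction from Step~1, so they cancel and the theorem follows.

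The only real obstacle is sign bookkeeping, in two places: the off-diagonal cells carry $-t$ both in the prefactor and through the reflection $B_{m+1}(1-t)$, and these two sign flips must produce the same parity factor $(-1)^{m}$. I would verify this first at $m=1$ as a check. There the $t$-linear terms cancel and the $b,c$ contributions to $\widehat\Xi_1$ enter with coefficient $+1$, consistent with the quadratic $t^{2}$ coefficient found in Theorem~\ref{thm:hyp-main}.
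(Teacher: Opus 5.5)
Your route is the paper's: add the expansion of half the logarithm of the four-factor ratio to $A_m$, rewrite the shifted Bernoulli polynomials through $\Bh_{m+1}(t)$, and check that the leftover monomial cancels the prefactor correction. You convert $B_{m+1}(t+1)$ and $B_{m+1}(1-t)$ to $\Bh_{m+1}(t)$ in one step, whereas the paper first passes through $B_{m+1}(t)$; the difference is cosmetic. Steps 2 and 3, including all the signs, are correct.

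There is, however, a genuine error in the justification of Step 1. The identity $\kappa(1-\kappa)\eta(1-\eta)=(ad)^{2}$ is false. In fact $\kappa(1-\kappa)\eta(1-\eta)=ad$; it is $ad\cdot bc=(ad)^{2}$ that equals its square. For example, at $\kappa=\eta=\tfrac12$ the left side is $\tfrac1{16}=ad$. Hence $K(N-K)n(N-n)=N^{4}ad$, which is the value the paper uses in this proof.

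With your value, the prefactor at $t=0$ would be $N\cdot N^{4}abcd/(N^{4}(ad)^{2})=N$ rather than $Nad$. The extra factor ``(cells$/N$)'' you insert to reach $Nad$ has no meaning. With the correct value the computation is simply $N\cdot N^{4}abcd/(N^{4}ad)=Nbc=Nad=\Sigma_0^{2}$.

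Your displayed four-factor identity is therefore true, and once this line is repaired the rest of the proof goes through unchanged. The same false identity appears in the paper's proof of Theorem~\ref{thm:hyp-main}, which may be where it came from.
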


	Relative to the all-Stirling entropy prefactor of the four cells, the displacement enters only
	through the reciprocal Bernoulli polynomials.  The coefficients $\widehat A_m$ are covariant
	under the table reflection $(t,\kappa)\mapsto(-t,1-\kappa)$, equivalently
	$k\mapsto n-k$, $K\leftrightarrow N-K$.

\begin{proof}
	The passage from the prefactor of \eqref{eq:hyp-main} to that of \eqref{eq:hyp-even} is
	\[
	\begin{aligned}
		\tfrac12\log\frac{k(K-k)(n-k)(L+k)N}{K(N-K)n(N-n)\,\Sigma_0^{2}}
		={}&\tfrac12\Bigl[\log\Bigl(1+\frac t{Na}\Bigr)
			+\log\Bigl(1-\frac t{Nb}\Bigr)\\
		&\quad+\log\Bigl(1-\frac t{Nc}\Bigr)
			+\log\Bigl(1+\frac t{Nd}\Bigr)\Bigr],
	\end{aligned}
	\]
	because $K(N-K)n(N-n)=N^{4}ad\cdot(bc/ad)=N^{4}ad$ and
	$k(K-k)(n-k)(L+k)=N^{4}abcd\prod(1\pm t/(N\cdot\text{cell}))$, with $abcd=(ad)^{2}$.
	Expanding the four logarithms (convergent, truncatable at any order uniformly), the
	$m$-th coefficient added to $A_m$ is
	\[
		\frac{t^{m}}{2m}\Bigl[(-1)^{m-1}\bigl(a^{-m}+d^{-m}\bigr)
		-\bigl(b^{-m}+c^{-m}\bigr)\Bigr].
	\]
	On the other hand, writing \eqref{eq:hyp-Am} on the single argument $t$ by
	\eqref{eq:appell} --- $B_{m+1}(t+1)=B_{m+1}(t)+(m+1)t^{m}$ and
	$B_{m+1}(1-t)=(-1)^{m+1}B_{m+1}(t)$ --- and then replacing $B_{m+1}(t)$ by
	$\Bh_{m+1}(t)=B_{m+1}(t)+\tfrac{m+1}2t^{m}$ inside the resulting weight
	$\widehat\Xi_m$, the change is
	$\frac{(-1)^{m}}{2m}\widehat\Xi_m\,t^{m}$, and the elementary residue left by the first
	step is $\frac{(-1)^{m}}{m}\bigl(a^{-m}+d^{-m}\bigr)t^{m}$.  The identity to check,
	\[
	\begin{aligned}
		\frac{(-1)^{m}}{m}(a^{-m}+d^{-m})t^{m}
		&+\frac{t^{m}}{2m}
			\Bigl[(-1)^{m-1}(a^{-m}+d^{-m})-(b^{-m}+c^{-m})\Bigr]\\
		&=\frac{(-1)^{m}}{2m}\widehat\Xi_m\,t^{m},
	\end{aligned}
	\]
	reduces, after dividing by $t^{m}/(2m)$, to
	$(-1)^{m}(a^{-m}+d^{-m})-(b^{-m}+c^{-m})
	=(-1)^{m}\bigl[(a^{-m}+d^{-m})+(-1)^{m+1}(b^{-m}+c^{-m})\bigr]$, which is immediate.
	Covariance under the table reflection: the map $\kappa\mapsto1-\kappa$ exchanges
	$a\leftrightarrow c$ and $b\leftrightarrow d$, hence fixes $W_m$ and maps
	$\widehat\Xi_m\mapsto(c^{-m}+b^{-m})+(-1)^{m+1}(d^{-m}+a^{-m})
	=(-1)^{m+1}\widehat\Xi_m$, while $\Bh_{m+1}(-t)=(-1)^{m+1}\Bh_{m+1}(t)$; the two signs
	cancel.
\end{proof}

\subsection{The mode: the vertex misses the threshold, and it does not matter}

	The classical mode is
	\begin{equation}\label{eq:hyp-mode-classical}
		k^{*}=\Bigl\lfloor\frac{(n+1)(K+1)}{N+2}\Bigr\rfloor,
	\end{equation}
	from the ratio $\frac{P(k)}{P(k-1)}=\frac{(K-k+1)(n-k+1)}{k(L+k)}\ge1
	\iff(K+1)(n+1)\ge k(N+2)$ (the quadratic terms cancel), with two modes exactly when
	$(n+1)(K+1)/(N+2)\in\Z$.

\begin{theorem}\label{thm:hyp-mode}
	Let $t_c$ be the vertex of the quadratic part of $A_1$ of \eqref{eq:hyp-Am}.  Then
	\begin{equation}\label{eq:hyp-vertex}
	\begin{gathered}
		t_c=-\frac{(N-2n)(N-2K)}{2N^{2}},\\
		\mu+t_c+\tfrac12-\frac{(n+1)(K+1)}{N+2}
		=-\frac{(2K-N)(2n-N)}{N^{2}\,(N+2)} .
	\end{gathered}
	\end{equation}
	and the discrepancy satisfies
	\[
		\Bigl|\mu+t_c+\tfrac12-\frac{(n+1)(K+1)}{N+2}\Bigr|<\frac1{N+2} .
	\]
	Consequently, whenever $(n+1)(K+1)/(N+2)\notin\Z$, the unit window
	$\bigl(\mu+t_c-\tfrac12,\,\mu+t_c+\tfrac12\bigr]$ contains exactly one integer and it is
	the mode \eqref{eq:hyp-mode-classical}; when $(n+1)(K+1)/(N+2)\in\Z$ the window boundary
	falls within $1/(N+2)$ of the tied pair, and the integer it selects is one of the two
	modes.
\end{theorem}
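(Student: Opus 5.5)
The plan is to read off the linear and quadratic parts of $A_1$ from \eqref{eq:hyp-Am}, locate the vertex, and then compare it with the classical threshold using the fact that this threshold has denominator $N+2$.

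\emph{Computing the vertex.} Take $m=1$ in \eqref{eq:hyp-Am} and use $B_2(t+1)=t^{2}+t+\tfrac16$ and $B_2(1-t)=t^{2}-t+\tfrac16$. The $t$-dependent part of $A_1$ is then
\[
-\tfrac12\Bigl[S\,t^{2}+\Bigl(\tfrac1a+\tfrac1d-\tfrac1b-\tfrac1c\Bigr)t\Bigr],
\qquad S=\tfrac1a+\tfrac1b+\tfrac1c+\tfrac1d=\tfrac1{ad},
\]
where the value of $S$ comes from \eqref{eq:sigma0}. The vertex is therefore
\[
t_c=-\tfrac{ad}2\Bigl(\tfrac1a+\tfrac1d-\tfrac1b-\tfrac1c\Bigr)=-\tfrac12(a+d-b-c),
\]
using $ad/b=c$ and $ad/c=b$, both from $ad=bc$. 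Expanding in the margins gives $a+d-b-c=(2\kappa-1)(2\eta-1)$, which yields the first line of \eqref{eq:hyp-vertex}.

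\emph{The exact discrepancy.} I will write $\mu=nK/N$ and put
\[
\mu+t_c+\tfrac12-\frac{(n+1)(K+1)}{N+2}
\]
over the common denominator $2N^{2}(N+2)$. The numerator should collapse to $-2(2K-N)(2n-N)$, giving the second line of \eqref{eq:hyp-vertex}. This polynomial identity in $N,K,n$ is the one step where the bookkeeping can go wrong. To keep it manageable I would substitute $K=N/2+u$ and $n=N/2+v$, so that the claimed numerator becomes $-8uv$. In these variables the cross terms should be transparent, and I would verify the identity by expanding in $u,v$.

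\emph{The bound.} Since $1\le K\le N-1$ and $1\le n\le N-1$, we have $|2K-N|\le N-2<N$ and likewise $|2n-N|<N$. Hence $|(2K-N)(2n-N)|<N^{2}$, and dividing by $N^{2}(N+2)$ gives $|\delta|<1/(N+2)$, where $\delta$ denotes the discrepancy.

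\emph{The window consequence.} Put $\theta=(n+1)(K+1)/(N+2)$. The window's right endpoint is $\theta+\delta$, and a half-open unit window contains exactly one integer, namely $\lfloor\theta+\delta\rfloor$.

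If $\theta\notin\Z$, then $\theta$ is a rational number with denominator $N+2$, so its distance to every integer is at least $1/(N+2)>|\delta|$. Consequently no integer lies between $\theta$ and $\theta+\delta$, and $\lfloor\theta+\delta\rfloor=\lfloor\theta\rfloor=k^{*}$.

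If $\theta\in\Z$, then $\lfloor\theta+\delta\rfloor$ equals $\theta$ when $\delta\ge0$ and $\theta-1$ when $\delta<0$. Both of these are modes by the ratio computation preceding \eqref{eq:hyp-mode-classical}.

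I expect the only real obstacle to be the rational identity in the discrepancy step. Everything else follows directly from \eqref{eq:sigma0} and $ad=bc$.
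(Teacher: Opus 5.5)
Your proposal is correct and follows the paper's proof essentially step for step. You obtain the vertex from $\tfrac1a+\tfrac1b+\tfrac1c+\tfrac1d=\tfrac1{ad}$ and $ad=bc$, verify the discrepancy identity by a centred substitution (your $K=N/2+u$ is the paper's $u=2K-N$ up to a factor $2$, and the numerator $-8uv$ over $2N^{2}(N+2)$ checks out), get the strict bound from $|2K-N|,|2n-N|<N$, and close with the same denominator-$(N+2)$ resolution argument, with your tie case slightly sharper in that it identifies which of $\theta$, $\theta-1$ is selected according to the sign of $\delta$.
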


\begin{proof}
	From \eqref{eq:hyp-Am},
	\[
		A_1=-\tfrac12\bigl[(a^{-1}+d^{-1})B_2(t+1)
		+(b^{-1}+c^{-1})B_2(1-t)\bigr]+\text{const},
	\]
	and since $B_2(t+1)=t^{2}+t+\tfrac16$, $B_2(1-t)=t^{2}-t+\tfrac16$, its
	quadratic-plus-linear part is $-\tfrac12\bigl[(A+B)t^{2}+(A-B)t\bigr]$ with
	$A=a^{-1}+d^{-1}$, $B=b^{-1}+c^{-1}$.  The vertex is
	\[
		t_c=-\frac{A-B}{2(A+B)} .
	\]
	Using $ad=bc$: $A-B=\dfrac{a+d-b-c}{ad}$ and $A+B=\dfrac{1}{ad}$, so
	$t_c=-\tfrac12(a+d-b-c)$.  Now
	\[
	\begin{aligned}
		a+d-b-c
		&=\kappa\eta+(1-\kappa)(1-\eta)-\kappa(1-\eta)-\eta(1-\kappa)\\
		&=(2\kappa-1)(2\eta-1)=\frac{(2K-N)(2n-N)}{N^{2}},
	\end{aligned}
	\]
	giving the first formula of \eqref{eq:hyp-vertex}.  For the second, write $u=2K-N$,
	$v=2n-N$, so $K=\tfrac{N+u}2$, $n=\tfrac{N+v}2$; then
	\[
	\begin{aligned}
		\mu+t_c+\tfrac12
		&=\frac{nK}N-\frac{uv}{2N^{2}}+\tfrac12,\\
		\frac{(n+1)(K+1)}{N+2}
		&=\frac{(N+u+2)(N+v+2)}{4(N+2)},
	\end{aligned}
	\]
	and a direct computation --- expand both over the common denominator $N^{2}(N+2)$ ---
	gives
	\[
		\mu+t_c+\tfrac12-\frac{(n+1)(K+1)}{N+2}=-\frac{uv}{N^{2}(N+2)} .
	\]
	Since $0<K<N$ and $0<n<N$, we have $|u|<N$ and $|v|<N$, so $|uv|<N^{2}$ and the
	discrepancy is $<1/(N+2)$ in absolute value, strictly.

	Finally, the threshold $x:=(n+1)(K+1)/(N+2)$ is a rational number with denominator
	dividing $N+2$; if $x\notin\Z$, its distance to every integer is at least $1/(N+2)$.  The
	open interval of radius $1/(N+2)$ about $x$ therefore contains no integer, and it
	contains $\mu+t_c+\tfrac12$; hence $\lfloor\mu+t_c+\tfrac12\rfloor=\lfloor x\rfloor$, and
	the unique integer in the unit window ending at $\mu+t_c+\tfrac12$ is
	$\lfloor x\rfloor=k^{*}$.  If $x\in\Z$, the same estimate places $\mu+t_c+\tfrac12$
	within $1/(N+2)$ of $x$, so the selected integer is $x$ or $x-1$: the two tied modes.
\end{proof}

\begin{remark}[A Katz--Ord dichotomy]\label{rem:dichotomy}
	For the three Katz laws the vertex rule is exact \emph{on the nose}:
	$\mu+t_c+\tfrac12$ \emph{equals} the classical threshold ($\lambda$; $(N+1)p$;
	$(r-1)q/p$).  For the Ord law the vertex misses the threshold by the explicit amount
	$-(2K-N)(2n-N)/(N^{2}(N+2))$, which vanishes only for a half-population margin
	($K=\tfrac N2$ or $n=\tfrac N2$) --- but the miss is provably below the resolution
	$1/(N+2)$ of the threshold's arithmetic, so the rule survives.  The factor $(2K-N)(2n-N)$ is, up to sign, the numerator of the
		classical hypergeometric skewness $\propto(N-2K)(N-2n)$; the new content is its exact
		packaging as the discrepancy and the sub-resolution bound.  The linear probability
	ratio is exactly captured by the first coefficient; the quadratic ratio is captured up to
	a harmless defect.  This is, in miniature, the same linear/quadratic divide that separates
	the Katz collapse from the hypergeometric one in \cite{elezovic_madfam}.
\end{remark}

\begin{remark}[Reading the mode off an expansion]\label{rem:kms}
	The device of locating a mode as the nearest integer to a value read from an asymptotic
	expansion, with a controlled defect, is that of Kabluchko, Marynych and Sulzbach
	\cite{kabluchko_ms} for the Ewens distribution and the Stirling numbers, where the mode is the
	nearest integer to $\theta\log n-\theta\,\Gamma'(\theta)/\Gamma(\theta)-\tfrac12$ --- the same
	$-\tfrac12$ shift as here.  There, however, the nearest-integer rule holds only on a
	density-one set of $n$ and fails for infinitely many; the content of
	Theorems~\ref{thm:nb-mode} and~\ref{thm:hyp-mode} is that for the Katz--Ord family the defect
	is not merely $o(1)$ but provably below the lattice resolution $1/(N+2)$, so the rule is exact
	for \emph{all} parameters.
\end{remark}

\begin{remark}[Symmetric point]\label{rem:hyp-parity}
	At a symmetric configuration ($t=0$ with $\mu\in\Z$) the odd-index Bernoulli values kill
	the even-$m$ margin terms ($B_{m+1}=0$), and the even normalisation \eqref{eq:hyp-Am-even}
	collapses to even $\Bh$-values: the expansion at the central point runs, as in the
	binomial and negative binomial cases, effectively in every second order.  The extreme
	instance is $\kappa=\eta=\tfrac12$ with $N=2K=2n$, where $P$ is symmetric about $\mu$.
\end{remark}

\section{The family in one table}\label{sec:family}

	\begin{center}\small
	\begin{tabular}{llll}
	\hline
	law & ratio deg & scalings (num.; den.) & weight of $B_{m+1}$/$\Bh_{m+1}$\\
	\hline
	$\Poi(\lambda)$ & 1 & $-$;\ $1$ & $1$\\[3pt]
	$\Bin(N,p)$ & 1 & $1$;\ $p,q$ &
		$p^{-m}+(-1)^{m+1}q^{-m}$\\[3pt]
	$\NB(r,p)$ & 1 & $\tfrac1p$;\ $1,\tfrac qp$ &
		$(p/q)^{m}-p^{m}$\\[3pt]
	$\Hyp(N,K,n)$ & 2 &
		\begin{tabular}[t]{@{}l@{}}
			$\kappa,1{-}\kappa,\eta,1{-}\eta$;\\
			$a,b,c,d,1$
		\end{tabular} &
		$\begin{array}[t]{@{}l@{}}
			(a^{-m}{+}d^{-m})\\
			{}+(-1)^{m+1}(b^{-m}{+}c^{-m})
		\end{array}$\\
	\hline
	\end{tabular}
	\end{center}

	In the weight column $w$ is the coefficient of $B_{m+1}(t)$ (resp.\ $\Bh_{m+1}(t)$) in
	the bracket $[(\mathrm{const})\,B_{m+1}-w\,B_{m+1}(t)]$, the factor $\tfrac{(-1)^{m+1}}{m(m+1)}$
	kept outside; for the negative binomial $w=(p/q)^m-p^m$, the same weight as in the even-normalisation theorem
	and, for the binomial, in \cite{elezovic_mode}.

	\begin{center}\small
	\begin{tabular}{lll}
	\hline
	law & $t_c$ & $\mu+t_c+\tfrac12$\\
	\hline
	$\Poi(\lambda)$ & $-\tfrac12$ & $\lambda$\\[3pt]
	$\Bin(N,p)$ & $\tfrac{p-q}2$ & $(N+1)p$\\[3pt]
	$\NB(r,p)$ & $-\tfrac qp-\tfrac12$ & $\tfrac{(r-1)q}p$\\[3pt]
	$\Hyp(N,K,n)$ & $-\tfrac{(N-2n)(N-2K)}{2N^{2}}$ &
		$\begin{array}[t]{@{}l@{}}
			\dfrac{(n+1)(K+1)}{N+2}\\
			{}-\dfrac{uv}{N^{2}(N+2)}
		\end{array}$\\
	\hline
	\end{tabular}
	\end{center}

	\noindent
	($u=2K-N$, $v=2n-N$.)  Three structural constants of the family:
	(i) the balance $\sum(\text{numerator scalings})=\sum(\text{denominator scalings})$ holds
	in every case, so \eqref{eq:lemma} applies with no exponentially large or small factors
	surviving;
	(ii) entries of the law that grow with $k$ carry the shift $t+1$ and enter the weights
	with $+$, entries that shrink carry $1-t$ and enter with the parity sign --- the binomial
	grammar, spoken by each law over its own cells;
	(iii) the even normalisation replaces $B$ by $\Bh$ verbatim and makes the law's reflection
	manifest ($p\leftrightarrow q$; $q\mapsto1/q$; the table transposition).

\section{Concluding remarks}\label{sec:conclusion}

	The mean-absolute-deviation companion \cite{elezovic_madfam} uses the expansions of
	Theorems~\ref{thm:nb-main} and~\ref{thm:hyp-main} at the single displacement
	$t=h\in[0,1)$; the present paper supplies the full local theory --- arbitrary bounded
	displacements, the even normalisations, the reflections, and the mode.

	The modes themselves are classical (see \cite[Chs.~4--6]{jkk}; for the binomial mode and the
		mode--mean window, Kaas and Buhrman \cite{kaas_buhrman}), and so is the engine: the
	shifted Stirling series \eqref{eq:stirling-shift} is \cite[(5.11.8)]{dlmf} and, for ratios,
	Tricomi--Erd\'elyi \cite{tricomi_erdelyi}; low-order local expansions for these laws exist
	in the literature \cite{govindarajulu,nicholson,molenaar,ouimet_nb}.  What we claim is the
	finer layer: the \emph{complete} expansions with coefficients identified in closed Bernoulli
	form, the $\Bh$ normalisations with their reflections, and the exact vertex theorems --- in
	particular the hypergeometric discrepancy identity \eqref{eq:hyp-vertex}.  The distinction
	is consistent across the prior local-limit literature for these laws: those expansions are
	Edgeworth series in a continuous standardised variable, carried to fixed order, with
	coefficients built from cumulants \cite{govindarajulu,nicholson,ouimet_nb}, whereas the
	displacement here is held on the lattice and every order is evaluated in closed Bernoulli
	form.  It is precisely that closed form which makes the mode readable off the first
	coefficient; the \emph{standard continuous} Edgeworth expansion in the standardised variable does not
	exhibit the lattice arithmetic directly --- although lattice Edgeworth series with
	continuity-corrected (Sheppard) cumulants do encode it \cite{kolassa_mccullagh} --- and so
	does not by itself yield Theorems~\ref{thm:nb-mode} and~\ref{thm:hyp-mode}.

	Two natural continuations remain.  First, the multinomial: the four-cell structure of
	Section~\ref{sec:hyp} is the $2\times2$ instance of the general contingency table with
	fixed margins, and the machinery extends.  Second, the regime $t\to\infty$ with the
	crossover into the Edgeworth gauge, for which the natural uniform objects are the
	incomplete beta (binomial, negative binomial) and its hypergeometric analogue.


\end{document}